\theoremstyle{definition}
\newtheorem{definition}{Definition}
\newtheorem{theorem}{Theorem}
\newtheorem{lemma}[theorem]{Lemma}
\newtheorem{proposition}[theorem]{Proposition}
\newtheorem*{remark}{Remark}
\begin{document}
\title[Cohomological equations for linear involutions]{Cohomological equations for linear involutions}

\date{\today}
\author{Erwan Lanneau, Stefano Marmi and Alexandra Skripchenko}

\address{Institut Fourier, Universit\'e Grenoble-Alpes, BP 74, 38402 Saint-Martin-d'H\`eres, France}
\email{erwan.lanneau@univ-grenoble-alpes.fr}
\address{Scuola Normale Superiore, Piazza dei Cavalieri 7, 56126 Pisa, Italy}
\email{stefano.marmi@sns.it}
\address{Faculty of Mathematics, National Research University Higher School of Economics, Usacheva St. 6, 119048 Moscow, Russia \textit{and}
Skolkovo Institute for Science and Technology, Skolkovo Innovation Center, 143026 Moscow, Russia}
\email{sashaskrip@gmail.com}

\begin{abstract}
In the current note we extend results by Marmi, Moussa and Yoccoz about cohomological equations for interval exchange transformations to irreducible linear involutions.
\end{abstract}
\maketitle

\section{Historical background and statement of the result} 

\subsection{Historical background}
Cohomological equations in connection with interval exchange transformations and linear flows were widely studied in the last 20 years. 
Historically the first one was a pioneer work by Forni (\cite{Fo1}) on the cohomological equation associated to linear flows on surfaces of higher genus. His interest toward cohomological equations was
motivated by the studying of time changing of the flows. He developed some advanced analytical tools to prove the existence of the solution for cohomological equations for almost all direction of the linear flow for a given translation surface. 

In the current note we generalize one of the first and the most important results for cohomological equations for interval exchange transformation (\cite{MaMoYo1}) to irreducible linear involutions. With respect to Forni's paper, the result of~\cite{MaMoYo1} is sharper (from the point of view of degree of smoothness) and an explicit Diophantine condition (called Roth type) is established; the proof itself relies on the detailed analysis of the  properties of the renormalization map (Rauzy induction). This result was further refined and generalized in a several directions. 

First of all, the proof of the full measure statement was significantly simplified by C. Matheus in~\cite{MaMaMo}, where the author used some results established in~\cite{AGY} in connection with the exponential tail of the roof function for Teichm\"uller flow to rewrite the hardest technical part, i. e. proof of the full measure statement for the condition (a) (see below). 
 
Also, in~\cite{MaYo} a further development of the ideas from~\cite{MaMoYo1} leads to an improvement of the regularity part: the solutions of the cohomological equation for \emph {restricted Roth type} 
interval exchange maps (see more details on this notion below) are H\"older continuous provided that the datum is of class $C^{r}$ with $r > 1$ and belongs to a finite-codimension linear subspace.

The notion of the Roth type IET was slightly modified in~\cite{MaMoYo2} and adapted to the question of the conjugacy of generalized (affine) interval exchange transformations and obstructions for the linearization. The set of considered IETs is called \emph{restricted Roth type} and has a full Lebesgue measure in the parameter space. For this class, it was shown that among $C^{r+3}$-deformation of any IET $T_0$ from this class that are $C^{r+3}$ tangent to $T_0$ at the singularities, those that are conjugated to $T_0$ by a $C^{r}$-diffeomorphism ($r\ge 2$) close to the identity form a $C^1$-submanifold of codimension $(g-1)(2r+1) + s$. 

In~\cite{FoMaMa} statements from~\cite{MaMoYo2} are extended for some particular classes of self-similar IETs, including those that are associated to the Eierlegende Wollmilchsau and the Ornithorynque. 

In~\cite{MaUlYo} a new type of Diophantine condition is introduced:  the absolute Roth type condition is a weakening of the notion of Roth type. It is shown that the results from~\cite{MaYo} can be extended for restricted absolute Roth type IETs. Also, from the work by Chaika and Eskin on Oseledets genericity of the Kontsevich-Zorich cocycle (see~\cite{ChEs}) one gets that for a given translation surface for almost every direction of the linear flow on it the first return map to the transversal satisfies the absolute Roth type condition.

A completely new approach to the solution of the cohomological equation in the special case of pseudo-Anosov maps, employing transfer operator technique,s has been successfully developed in \cite{FaGoLa}.

Finally, Zubov in~\cite{Zu} treated the question of existence of the solution of cohomological equation from the point of view of symbolic dynamics: he worked in a framework of suspension flows over automorphisms of Markov compacta and established the sufficient condition of the existence of the bounded solution of cohomological equation in terms of finitely additive measures for Markov compacta. 

\subsection{Statement of the result} 

Our results represent another way to generalize~\cite{MaMoYo1}: we replace IETs by \emph{linear involutions}, which appear to be the first return map for the foliations, defined by quadratic differentials. Danthony and Nogueira introduced linear involutions in~\cite{DaNo}; they also described Rauzy induction for linear involutions. Linear involutions are encoded by a combinatorial datum (known as \emph{generalized permutation}) and by continuous data (lengths of the intervals $\lambda_i$). More precisely, let $\mathcal{A}$ be an alphabet on $d$ letters; then a generalized permutation of a type $(l,m)$ with $l+m=2d$ is a two-to-one map $\pi: \{1,\cdots, 2d\}\to\mathcal{A}$ (we also call the letters of the alphabet \emph{names} sometimes); as for the lengths, there is a natural condition $\sum_{i=1}^{l}\lambda_{\pi(i)} = \sum_{i=l+1}^{l+m}\lambda_{\pi(i)}$. 

Boissy and Lanneau in~\cite{BoLa} defined a special class of geometrically meaningful (also known as \emph{irreducible}) generalized permutations, namely for which that there exists a linear involution associated to this generalized permutation, such that this linear involution represents an appropriate cross section of the vertical foliation of some quadratic differential. They also studied the relationship between the connected components of the strata of the moduli spaces of quadratic differentials and [extended] Rauzy classes of irreducible generalized permutations (see~\cite{BoLa} for details). Let $T$ denote a linear involution defined on a set of $2d$ intervals $\sqcup A_i$. If the underlying permutation is irreducible (see \cite{BoLa} for precise definition) then there is a quadratic differential associated to it, and we will say that $T$ \emph{belongs to} the corresponding stratum of the moduli space. Moreover the Rauzy - Veech induction is well defined and described by matrices of size $d$ and the sequence of arrows in the Rauzy diagram $\mathcal{D}$; the arrows are named by the winner of each step. Zorich acceleration of this algorithm and the further acceleration introduced in ~\cite{MaMoYo1} can be defined exactly in the same way as for classical interval exchange transformations. We will denote by $Z(k)$ the product of Rauzy matrices that correspond to a concatenation of paths in the Rauzy diagram $\mathcal{D}$ such that all letters but one win at least once. We will also denote by $T(k)$ the linear involution obtained by iterating the accelerated Rauzy-Veech induction; its datum is defined by $(\pi(k), \lambda(k))$. 

We will also use the notation
$$
Q(k)=Z(1)\cdots Z(k)
$$
(see~\cite{BoLa} and~\cite[Section 1.2.4]{MaMoYo1} for details).

The Rauzy-Veech induction can also be defined on the level of an oriented double cover (as in~\cite{AvRe,Gu}).
More precisely it corresponds to the ``\emph{minus}'' Rauzy-Veech groups of~\cite[Section 4]{Gu}.
In order to state our result, we will define functions that are anti-invariant with respect to the involution on the double cover, or equivalently, functions that agree with the given irreducible linear involution (which implies in particular that for a given linear involution it is enough to define such a function on $d$ intervals). 

Let $\widehat{BV}_{*}^1(\sqcup A_i)$ be the set of functions $\widehat\Phi$ of class $C^1$ defined on each interval of continuity and with a derivative of bounded variation and whose integral on the domain vanishes. We also require that each $\widehat \Phi$ agrees with the given irreducible linear involution (the values of $\widehat \Phi$ on subintervals with the same name coincide or are the opposite depending on the involution). The definition of the Roth type interval exchange transformation given in~\cite{MaMoYo1} carries over the case of linear involutions (see Section~\ref{Roth} for the details). Then we have the following 
\begin{theorem}\label{A}
Let $T$ be a minimal linear involution of Roth type that does not belong to the following strata: $\mathcal{Q}(4g-4)$ (minimal),
or $\mathcal{Q}(2a, 2b, 2c,\cdots,2z)$. Then for any $\widehat\Phi\in \widehat{BV}_{*}^1(\sqcup A_i)$ there exists a function $\chi$ constant on each interval $A_i$ and that agrees with the given irreducible linear involution $T$, and a bounded function $\Psi$ such that  $$\Psi-\Psi\circ T=\Phi-\chi.$$
\end{theorem}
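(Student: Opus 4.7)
The plan is to reproduce the Marmi-Moussa-Yoccoz argument of~\cite{MaMoYo1} on the orienting double cover, where the anti-invariant subspace of $\mathbb{R}^{2d}$ carries the minus Rauzy-Veech cocycle of~\cite{Gu, BoLa} and $\widehat{BV}^1_*(\sqcup A_i)$ contains the lifts of the functions on the base. Since both the alphabet and the cocycle have dimension $d$ on the anti-invariant side, the combinatorial structure matches the classical IET situation; anti-invariance then propagates through every step and descends to the base at the end.

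The first step is to define the special Birkhoff sum operator $S(k)$, which sends $\widehat\Phi \in \widehat{BV}^1_*(\sqcup A_i)$ to the function on $\sqcup A_i(k)$ obtained by summing $\widehat\Phi$ along the return orbit of $T$ to each $A_i(k)$. Anti-invariance is preserved because the return orbits respect the involution, so $S(k)$ maps $\widehat{BV}^1_*$ into itself. Writing $S(k)\widehat\Phi = m(k) + r(k)$ with $m(k)$ piecewise constant on each $A_i(k)$ and $r(k)$ of vanishing mean, the standard derivative-of-bounded-variation estimate gives
\begin{equation*}
\|r(k)\|_\infty \leq \mathrm{Var}(\widehat\Phi') \cdot \max_i |A_i(k)|,
\end{equation*}
which tends to zero by minimality and the contraction encoded by $Q(k)$.

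The second step is the telescoping construction. The vectors $m(k) \in \mathbb{R}^d$ transform under the transpose of $Z(k)$. The Roth type conditions (a), (b), (c) of Section~\ref{Roth} provide an Oseledets-style splitting $\mathbb{R}^d_{\mathrm{anti}} = E^s_k \oplus \mathbb{R}\lambda(k) \oplus E^u_k$ with effective polynomial control on the stable and unstable projections of $m(k)$ relative to the growth of $Q(k)$. The piecewise constant correction $\chi$ is extracted from the $E^u_k$-projection of $m(k)$ and pulled back to the base, and the bounded solution $\Psi$ is produced as the uniformly convergent telescoping series of the differences $S(k)(\widehat\Phi - \chi) - S(k+1)(\widehat\Phi - \chi)$. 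Conditions (a) and (b) control the constant part, and the $r(k)$ bound of the previous paragraph handles the bounded variation remainder; anti-invariance of $\chi$ and $\Psi$ is automatic at every stage, yielding the cohomological identity $\Psi - \Psi\circ T = \Phi - \chi$ on the base.

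The main obstacle, and the source of the stratum exclusions, is the spectral input that makes conditions (a)--(c) exploitable on the anti-invariant block. In strata whose zero orders are all even, such as $\mathcal{Q}(4g-4)$ and $\mathcal{Q}(2a, 2b, \ldots, 2z)$, the half-translation structure is globally a square of an Abelian differential, so the orienting double cover disconnects and the anti-invariant cocycle inherits an additional invariant sub-bundle that destroys the simplicity of the top Lyapunov exponent required for the splitting $E^s_k \oplus \mathbb{R}\lambda(k) \oplus E^u_k$. Outside these strata, the needed spectral gap is available, the Roth-type machinery of~\cite{MaMoYo1} adapts verbatim to the minus cocycle, and the final descent of $\Psi$ and $\chi$ from the double cover to $T$ follows from the defining anti-invariance of $\widehat{BV}^1_*$.
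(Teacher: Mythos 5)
Your overall strategy --- run the Marmi--Moussa--Yoccoz machinery on the $d$-dimensional anti-invariant block of the orientation double cover, extract $\chi$ from the central/unstable part of the special Birkhoff sums, and control the remainder via conditions (a)--(c) --- is indeed the route the paper takes, and your observation that working intrinsically in the anti-invariant block makes $\chi$ and $\Psi$ agree with the involution is the right answer to the paper's remark about why one cannot just quote~\cite{MaMoYo1} on the double cover. But two of your steps are genuinely wrong or missing. First, your explanation of the stratum exclusion is false: a quadratic differential all of whose zeros have even order is \emph{not} in general the global square of an Abelian differential, and by convention the strata $\mathcal{Q}(2a,\dots,2z)$ consist precisely of non-squares, so their orientation double covers are connected and the minus cocycle does not acquire the extra invariant sub-bundle you describe. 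The actual reason for excluding $\mathcal{Q}(4g-4)$ and $\mathcal{Q}(2a,\dots,2z)$ is a limitation of the available spectral input: the simplicity of the plus/minus Lyapunov spectra that makes condition (b) usable (Lemma~\ref{LM:b}) is taken from Guti\'errez-Romo's theorem~\cite{Gu}, which at present does not cover those strata; the remark after Theorem~\ref{A} says explicitly that the restriction is technical and expected to disappear once~\cite{Gu} is extended.

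Second, the construction of $\Psi$ is not actually carried out. The expression $S(k)(\widehat\Phi-\chi)-S(k+1)(\widehat\Phi-\chi)$ is not defined as written, since $S(k)\widehat\Phi$ and $S(k+1)\widehat\Phi$ live on different unions of intervals, and no telescoping series of such terms by itself yields a function satisfying $\Psi-\Psi\circ T=\Phi-\chi$ pointwise. The paper, following~\cite{MaMoYo1}, proceeds by (i) bounding the \emph{special} Birkhoff sums of $\widehat\Phi-\chi$ using (a)--(c) together with the BV remainder estimate you state; (ii) passing from special to \emph{general} Birkhoff sums, which requires the reformulation of condition (a) as a balance estimate on the lengths $\lambda_\alpha^{(k)}$ --- this is exactly the content of Section~\ref{size}, whose proof is one of the two places where the argument differs from the IET case; and (iii) invoking Gottschalk--Hedlund after a Denjoy-type blow-up turning the minimal but discontinuous involution into a minimal homeomorphism of a compact space, where the marked orbits $D_0=\{\widehat T^{-n}(Sing)\}$ and $D_1=\{\widehat T^{n}(f(Sing))\}$ of Section~\ref{denjoy} must be chosen compatibly with the involution $f$ --- the second point of genuine deviation from~\cite{MaMoYo1}. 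Your proposal omits (ii) and (iii) entirely, and these are precisely the places where something specific to linear involutions has to be checked.
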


\begin{remark}
The restriction concerning the strata is rather technical: it comes from the fact that we use the result about simplicity of spectrum for certain stratum of quadratic differentials obtained by Guti\'errez-Romo in~\cite{Gu} (see section \ref{bc} for the details). Recently Bell, Delecroix, Gadre, Gutierrez-Romo and Schleimer informed us that it should be possible to extend the main result of~\cite{Gu} to all strata by using veering triangulations. In a such a case Theorem~\ref{A} will be automatically extended to all strata.
\end{remark}
\begin{remark}
Theorem \ref{A} above states that $\Psi$ and $\chi$ must agree with the linear involution. This rules out the possibility of deducing the statement by direct application of the statement proved in \cite{MaMoYo1} or \cite{MaMaMo} to IET appearing on a double cover level since a priori $\chi$ obtained in this way does not agree with the given irreducible linear involution.
\end{remark}

We prove also

\begin{theorem}
\label{B}
Roth-type linear involutions form a full measure set in the space of all irreducible linear involutions. 
\end{theorem}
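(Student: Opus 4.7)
The plan is to follow the strategy used for interval exchange transformations in \cite{MaMoYo1}, streamlined by Matheus in \cite{MaMaMo}, replacing every ergodic-theoretic input by its analogue on the space of linear involutions. The Roth-type condition is a conjunction of three conditions (a), (b), (c) on the sequence of accelerated Rauzy-Veech matrices $Q(k)$; since a finite intersection of full-measure sets has full measure, it suffices to verify each condition almost surely with respect to an ergodic invariant probability measure on each Rauzy class. The natural choice is the Masur-Veech measure lifted through the zippered-rectangle coordinates, whose ergodicity under the accelerated Rauzy-Veech map on the space of linear involutions is known from the work of \cite{BoLa} together with \cite{AvRe}.

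Conditions (a) and (c) would be handled by Matheus' argument: reformulate the growth condition $\|Z(k+1)\| = O(\|Q(k)\|^{\varepsilon})$ as an integrability statement for the excursions of the Teichm\"uller geodesic flow on the ambient stratum of quadratic differentials, invoke the exponential tail estimate of \cite{AGY} extended to strata of quadratic differentials by \cite{AvRe}, and conclude by Borel-Cantelli; the combinatorial condition (c) follows similarly from Borel-Cantelli combined with the mixing of the renormalization. Condition (b), requiring a subpolynomial loss of hyperbolicity along the stable subspace of the cocycle $Q(k)$ restricted to the anti-invariant part of the oriented double cover, is the delicate one: I would apply the Oseledets multiplicative ergodic theorem and appeal to the spectral gap between the zero Lyapunov exponents and the negative ones, which on the strata appearing in Theorem \ref{A} is provided by the simplicity result of Guti\'errez-Romo \cite{Gu} for the ``minus'' Rauzy-Veech cocycle. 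This is the step responsible for the stratum restriction in Theorem \ref{A}.

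The hard part will be the bookkeeping needed to pass from IETs to linear involutions through the double cover: one must check that the Oseledets decomposition, when restricted to the anti-invariant subspace on which the functions $\widehat\Phi$ live, carries the correct hyperbolic structure, and that the ``agrees with the involution'' constraint does not interfere with the tail estimates of Matheus and Avila-Resende. Once these verifications are in place, the intersection of the three almost-sure conditions yields a full-measure set of Roth-type linear involutions, which concludes the argument.
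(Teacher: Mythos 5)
There is a genuine gap, and it sits exactly at the step you flag as routine. Your plan for condition (a) is to run Matheus' argument: recast the growth condition $\|Z(k+1)\|\le C_\varepsilon\|Q(k)\|^\varepsilon$ as an integrability/excursion statement for the Teichm\"uller flow and conclude by Borel--Cantelli from the exponential tail of the roof function, using \cite{AvRe} as the quadratic-differential substitute for \cite{AGY}. The paper explicitly rules this route out: the estimates in \cite{AvRe}, while analogous in spirit to those of \cite{AGY}, are weaker and do not suffice to replace the construction of \cite{MaMoYo1} in the linear-involution setting. So the one part of your argument that carries the real weight --- condition (a), which is also the hardest part of the original IET proof --- rests on an input that is not available.

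What the paper does instead is rerun the original combinatorial volume argument of \cite[\S 4.6--4.8]{MaMoYo1} on the Rauzy diagram of generalized permutations. The key statement is Proposition~\ref{prop:comb}: for a finite path $\gamma$ whose set of winning names $\mathcal A'$ is a proper subset of $\mathcal A$, a definite proportion $\eta(d)$ of the simplex $\Delta(\gamma)$ admits a controlled continuation after which a letter outside $\mathcal A'$ wins. Its proof requires a genuinely new estimate (Lemma~\ref{lm:combi:1}, $Q_{\mathrm{ext}}(n,T)\le(2d-5)\,Q'(n,T)$) whose bookkeeping differs from the IET case because arrows in the Rauzy diagram for linear involutions carry \emph{secondary names} and letters can be simple or double; one then propagates $(D_1,n,C_1)$-balancedness and finishes with the Borel--Cantelli argument of \cite[\S 4.7]{MaMoYo1}. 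None of this is subsumed by your tail-estimate strategy, so you would need to supply it. Your treatment of conditions (b) and (c) --- Oseledets plus the simplicity result of \cite{Gu}, with the stratum restriction entering through (b) --- does match the paper and is fine.
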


\begin{remark}
It is important to mention that the approach used in~\cite{MaMaMo} is not applicable in our case: despite the fact that the result similar to the main statement of~\cite{AGY} for quadratic differentials was established in~\cite{AvRe}, the proof is based on somehow weaker estimations that can not be used to replace the construction introduced in~\cite{MaMoYo1}. 
\end{remark}

The note is organized in the following way: in Section~\ref{Roth} we define Roth-type linear involutions; in Section~\ref{equation} we describe the changes that are required to adapt the proof of Theorem~\ref{A} in~\cite{MaMoYo1} to the case of linear involutions; in Section~\ref{measure} we prove Theorem~\ref{B} (once again, emphasizing only the steps of the proof that are substantially different from the one presented in~\cite{MaMoYo1}). 

 \section{Roth-type linear involutions}\label{Roth}
Essentially, our definition of Roth-type maps coincides with the one in~\cite{MaMoYo1} (up to some technical differences between IETs and linear involutions). Here and in what follows given a matrix $A$ we denote by $||A|| = \max|A_{\alpha, \beta}|.$ 

Namely, we introduce three key conditions:
\begin{enumerate}[label=\alph*)]
\item \emph{Growth-rate condition:}
for every $\epsilon>0$ there exists $C_{\epsilon}>0$ such that for all $k\ge0$ we have 
\begin{equation}\label{condition1}
||Z(k+1)||\le C_{\epsilon}||Q(k)||^{\epsilon}
\end{equation}
This condition can be seen as a kind of a bounded distortion condition for the matrices appearing in an accelerated version of the Zorich continued fraction algorithm. Moreover, it reduces to the standard notion of Roth type irrational numbers in case of rotations (IETs on 2 intervals). 

\item \emph{Spectral gap}: the second condition is a spectral condition which guarantees unique ergodicity of Roth type minimal linear involutions. Formally, the condition is defined as follows:
for each $k\ge 0$, let $\Gamma^{(k)}$ be a copy of $\mathbb{R}^{d}$ (we consider it as a space of functions that are constant on each interval of continuity of our linear involution and agree with this linear involution). For $0\le k\le l$, let $S(k,l)$ be the linear map from $\Gamma^{(k)}$ to $\Gamma^{(l)}$ whose matrix in the
canonical basis is $^tQ(k,l)$. This can be interpreted as a
special Birkhoff sum (see~\cite[Section 2]{MaMoYo1} for details). 

As it was mentioned above, linear involutions are encoded by generalized permutations; each generalized permutation consists of the letters of some alphabet and every letter appears there twice. 

For $\phi =(\phi_\alpha )_{\alpha\in\hat{\mathcal{A}}}\in\Gamma^{(k)},$ define $$ I_k(\phi
)=\sum_{\alpha\in\hat{\mathcal{A}}}\lambda_\alpha^{(k)}\phi_\alpha\; ; $$ 
We then have $$ I_l(S(k,l)\phi)=I_k(\phi )\; . $$ Denote by
$\Gamma^{(k)}_*$ the kernel of the linear form $I_k$. We will ask
the following:

{\it There exists $\theta >0, C>0$ such that, for all
$k\ge 0$, we have} $$ \Vert S(k)\mid_{\Gamma^{(0)}_*}\Vert\le
C\Vert S(k)\Vert^{1-\theta}=C\Vert Q(k)\Vert^{1-\theta}\; . $$

\item \emph{Coherence}:
to define our third condition, we
consider again the operators $S(k,l)\, :\,
\Gamma^{(k)}\rightarrow\Gamma^{(l)}$. Let $\Gamma^{(k)}_s$ be the
linear subspace of $\Gamma^{(k)}$ whose elements $v$ satisfy the
following: there exists $\sigma =\sigma (v)>0$, $C=C(v)>0$ such
that, for all $l\ge k$, one has $$\Vert S(k,l)v\Vert\le C\Vert
S(k,l)\Vert^{-\sigma}\Vert v\Vert\; . $$

We call $\Gamma^{(k)}_s$ the stable subspace of $\Gamma^{(k)}$.
Obviously, one has $\Gamma^{(k)}_s\subset\Gamma^{(k)}_*$. Since the linear involution has the form $T(x) = \pm x + \delta_i,$ where $\delta_i$ is an analogue of translation vector in case of IETs, one can easily show that 
$\Gamma^{(k)}_s$ is never reduced to $0$ because it
always contains the translation vector
$(\delta_\alpha^{(k)})_{\alpha\in\mathcal{A}}$ (this statement can be manually verified using invariant antisymmetric form associated with linear involutions defined in \cite{AvRe};  see also \cite{Gu}). 

The operator $S(k,l)$ maps $\Gamma^{(k)}_s$ onto $\Gamma^{(l)}_s$.
Therefore we can define a quotient operator 
$$ S_{(k,l)}:
\Gamma^{(k)}/\Gamma^{(k)}_s\rightarrow
\Gamma^{(l)}/\Gamma^{(l)}_s\; . $$ 

The condition (c) states that the norm
of the inverse of $S_{(k,l)}$ is not too large:
{\it for any $\varepsilon >0$, there exists
$C_\varepsilon >0$ such that, for all $l\ge k$, we have}
$$ \Vert [S_{(k,l)}]^{-1}\Vert \le
C_\varepsilon\Vert Q(l)\Vert^\varepsilon\; $$
$$ \Vert S
(k,l)\mid_{\Gamma^{(k)}_s}\Vert \le C_\varepsilon\Vert
Q(l)\Vert^\varepsilon\;$$

\end{enumerate}

\subsection{Conditions (b) and (c) in terms of Lyapunov exponents}\label{bc}
As it was described in~\cite[Section 4.5]{MaMoYo1}, the cocycle that corresponds to Zorich acceleration is log-integrable with respect to the ergodic Masur-Veech measure and therefore Lyapunov exponents are defined almost everywhere. The same property holds for the case of linear involutions (see, for example,~\cite{Gu} and much stronger estimations in~\cite{AvRe}). 

Then in case of Abelian differentials the space $\Gamma_s$ was associated to the negative Lyapunov exponents. The same logic holds for the case of quadratic differentials with one major modification: in non-orientable case there are two natural families of Lyapunov exponents instead of one; usually these groups are called \emph{plus} and \emph{minus}, and the Lyapunov exponents from these groups are responsible for different ergodic properties. The formal definitions of plus and minus Lyapunov exponents can be found in~\cite{Tre,Gu}. It follows from the definition that 

\begin{lemma}
\label{LM:b}
Condition (b) holds almost always for linear involutions associated with quadratic differentials that do not belong to the following strata: $\mathcal{Q}(4g-4)$ (minimal)
or $\mathcal{Q}(2a, 2b, 2c,\cdots,2z)$.
\end{lemma}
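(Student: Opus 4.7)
The plan is to reinterpret condition (b) as a spectral gap statement at the top of the minus Rauzy-Veech spectrum and derive it from Guti\'errez-Romo's simplicity theorem. Since the Rauzy-Veech cocycle is log-integrable with respect to the Masur-Veech measure (in the IET case this is established in \cite[Section 4.5]{MaMoYo1}, and for linear involutions it follows from the estimates of \cite{AvRe} and \cite{Gu}), Oseledets' multiplicative ergodic theorem applies: for almost every linear involution $T$ the cocycle $S(k)$ on $\Gamma^{(0)}$ admits an Oseledets splitting with exponents $\lambda_1\ge\lambda_2\ge\cdots\ge\lambda_d$, and $\log\Vert S(k)\Vert=\log\Vert Q(k)\Vert$ grows like $k\lambda_1$.

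The invariant linear form $I_k$ makes the hyperplane $\Gamma^{(0)}_*=\ker I_0$ dynamically distinguished: by the relation $I_l\circ S(k,l)=I_k$, the subspace $\Gamma^{(0)}_*$ is mapped into $\Gamma^{(k)}_*$, and the induced quotient cocycle on $\Gamma^{(0)}/\Gamma^{(0)}_*\cong\mathbb{R}$ has exponent exactly $\lambda_1$ (the growth of $\Vert Q(k)\Vert$ is captured by the length vector direction transverse to $\Gamma^{(0)}_*$, as in the analysis of \cite[Section 4.5]{MaMoYo1}). Consequently the top Lyapunov exponent of $S(k)\mid_{\Gamma^{(0)}_*}$ is the second exponent $\lambda_2$ of the full cocycle. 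For every stratum of quadratic differentials not appearing in the excluded list, Guti\'errez-Romo proves simplicity of the top minus Lyapunov exponent, i.e.\ $\lambda_1>\lambda_2$. Choosing $\theta$ with $0<\theta<1-\lambda_2/\lambda_1$ and applying the standard Oseledets upper bound gives
$$\Vert S(k)\mid_{\Gamma^{(0)}_*}\Vert\le C\,\Vert Q(k)\Vert^{1-\theta}$$
for all $k\ge 0$ (absorbing finitely many initial terms into $C$), which is condition (b).

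The main obstacle is matching our cocycle with Guti\'errez-Romo's. In our setting $Q(k)$ describes the Rauzy-Veech induction on the anti-invariant subspace of the homology of the double cover of the flat surface associated with the quadratic differential, which corresponds precisely to the ``minus'' Rauzy-Veech group of \cite[Section 4]{Gu}. One must verify that $\Gamma^{(0)}$ is naturally identified with the vector space on which this minus cocycle acts and that the top exponent governing $\log\Vert Q(k)\Vert$ is exactly the top minus Lyapunov exponent proved simple by Guti\'errez-Romo. This identification is implicit in \cite{AvRe,Gu} but requires care with conventions. Once completed, the excluded strata $\mathcal{Q}(4g-4)$ and $\mathcal{Q}(2a,2b,\ldots,2z)$ are precisely those not covered by the simplicity theorem, justifying the lemma.
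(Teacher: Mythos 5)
Your proposal is correct and follows essentially the same route as the paper: the paper's own proof is a one-line reduction of condition (b) to the simplicity of the relevant (``minus'') Lyapunov spectrum established in \cite[Theorem 1.1]{Gu} for the non-excluded strata, which is exactly the spectral-gap argument you spell out via Oseledets. Your expanded justification (log-integrability, the invariance of $\ker I_k$, the choice $\theta<1-\lambda_2/\lambda_1$, and the identification of $\Gamma^{(0)}$ with the space carrying the minus cocycle) is the standard argument from \cite[Section 4.5]{MaMoYo1} that the paper leaves implicit.
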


\begin{proof}
Since in our case $\Gamma_s$ is associated with the negative \emph{plus} Lyapunov exponents, the lemma follows from~\cite[Theorem 1.1]{Gu}. 
\end{proof} 

Also, as in \cite{MaMoYo1} Oseledets theorem implies 
\begin{lemma}
\label{LM:c}
Coherence property follows for almost all irreducible linear involutions. 
\end{lemma}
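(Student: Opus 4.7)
The plan is to follow the argument of Section 4.5 of \cite{MaMoYo1} essentially verbatim, replacing the Kontsevich--Zorich cocycle over the moduli space of abelian differentials by its counterpart for quadratic differentials, realized concretely as the accelerated Rauzy--Veech cocycle $(Z(k))$ acting on $\Gamma^{(k)}$. The starting observation, already used in the proof of Lemma~\ref{LM:b}, is that this cocycle is log-integrable with respect to the Masur--Veech measure on the ambient stratum (stronger exponential tail estimates are proved in \cite{AvRe} and adapted to linear involutions in \cite{Gu}). Consequently, the Oseledets multiplicative ergodic theorem applies and produces, at almost every point, an invariant splitting $\Gamma^{(k)}=\bigoplus_i E_i^{(k)}$ into Oseledets subspaces whose Lyapunov exponents are precisely the (plus and minus) Lyapunov exponents of the cocycle.

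First I would identify $\Gamma_s^{(k)}$ with the sum $E_-^{(k)}$ of Oseledets subspaces carrying strictly negative plus exponents. The inclusion $E_-^{(k)}\subset\Gamma_s^{(k)}$ is immediate from Oseledets regularity. For the reverse inclusion, any $v\notin E_-^{(k)}$ has a nontrivial component in a subspace of non-negative exponent, hence $\|S(k,l)v\|$ does not decay exponentially, contradicting the definition of $\Gamma_s^{(k)}$. With this identification in hand, the two bounds of condition (c) reduce to direct Oseledets estimates. For the restriction, Oseledets gives $\|S(k,l)|_{\Gamma_s^{(k)}}\|\le C\,e^{(\mu_-+o(1))(l-k)}$ with $\mu_-<0$, while the top exponent $\mu_1>0$ of the cocycle (positive almost everywhere by the same simplicity-free inputs as in Lemma~\ref{LM:b}) yields $\|Q(l)\|\ge c\,e^{(\mu_1-o(1))(l-k)}$; combining these gives $\|S(k,l)|_{\Gamma_s^{(k)}}\|\le C_\varepsilon \|Q(l)\|^\varepsilon$. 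For the quotient operator, the action on $\Gamma^{(k)}/\Gamma_s^{(k)}$ has only non-negative Lyapunov exponents, so $\|[S_{(k,l)}]^{-1}\|$ grows sub-exponentially by Oseledets, which again combines with the exponential lower bound on $\|Q(l)\|$ to give $\|[S_{(k,l)}]^{-1}\|\le C_\varepsilon\|Q(l)\|^\varepsilon$.

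The main obstacle is the identification step: a priori $\Gamma_s^{(k)}$ is defined by vector-dependent constants $C(v)$ and $\sigma(v)>0$, and one must know that this abstractly defined subspace coincides with the sum of Oseledets subspaces of strictly negative exponent. In finite dimension this is a standard consequence of the Oseledets decomposition together with the fact that the distinct Lyapunov exponents are isolated, and it is exactly the argument carried out in \cite{MaMoYo1}; it transfers to our setting without modification because it uses no feature of the cocycle beyond the existence of Lyapunov exponents. Note in particular that, unlike condition (b), coherence does \emph{not} require any simplicity statement and therefore applies in all strata, without the restrictions appearing in Lemma~\ref{LM:b}.
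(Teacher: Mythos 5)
Your proposal is correct and follows essentially the same route as the paper, which for this lemma simply invokes the Oseledets theorem as in Section~4.5 of \cite{MaMoYo1}; your write-up is a faithful expansion of that one-line argument (log-integrability, Oseledets splitting, identification of $\Gamma_s^{(k)}$ with the sum of negative-exponent subspaces, and the two sub-exponential estimates). Your closing observation that coherence needs no simplicity input, and hence holds in all strata unlike Lemma~\ref{LM:b}, also matches the paper's statement.
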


\section{Cohomological equation}\label{equation}
Morally, the proof of Theorem A in our case coincides with the one suggested in~\cite{MaMoYo1} and contains a few steps: 
\begin{itemize}
\item Gottshalk-Hedlund theorem implies that in case of a minimal homeomorphism of a compact space the question about existence of the solution of cohomological equations can be reduced to the question about the boundedness of Birkhoff sums. As in~\cite{MaMoYo1}, our map is minimal but not continuous; so we need to apply some kind of Denjoy-style construction which is slightly different to to the one suggested in~\cite{MaMoYo1}. We provide the construction in Section~\ref{denjoy}.
\item  the next step in the proof is the reduction of the control of a general Birkhoff sum to the control of those \emph{special} Birkhoff sums which are obtained by considering the return times of the point under iteration of the map. This step in our case works exactly in the same way as in~\cite{MaMoYo1}. 
\item the estimates of these special Birkhoff sums for functions of bounded variation result in the proof of the statement. At this moment we apply properties (a), (b) and (c) and use two equivalent statement of property (a) (the one in terms of matrices and their norms and the one in terms of lengths of intervals). The proof of the equivalence of these two statements is slightly different in our case; the detailed proof is provided in Section~\ref{size}.
\end{itemize}

\subsection{Size of matrices}\label{size}
We can reformulate (a) in terms of the lengths $\lambda_\alpha^{(k)}$. We use as norm of a
matrix the sum of all coefficients.

As in~\cite{MaMoYo1}, we have the following
\begin{proposition}

For $k\ge 0$ 
$$\max_{\alpha\in \mathcal{A}}\lambda_\alpha^{(k)}\ge
\lambda^*\Vert
Q(k)\Vert^{-1}\ge \min_{\alpha\in \mathcal{A}}\lambda_\alpha^{(k)}\; .
$$

Condition (a) is equivalent to the following converse
estimate: for all $\varepsilon > 0$, there exists $C_\varepsilon
>0$ such that 
\begin{equation}\label{propA}
\max_{\alpha\in \mathcal{A}}\lambda_\alpha^{(k)}\le
C_\varepsilon \min_{\alpha\in \mathcal{A}}\lambda_\alpha^{(k)}\Vert
Q(k)\Vert^{\varepsilon}\; . 
\end{equation}
\end{proposition}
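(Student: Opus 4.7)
The plan for the first chain of inequalities is a direct computation from the renormalization identity $\lambda^{(0)} = Q(k)\lambda^{(k)}$. Writing $\lambda^* := \sum_\alpha \lambda_\alpha^{(0)}$, we will expand
\begin{equation*}
\lambda^* \;=\; \sum_\beta c_\beta^{(k)}\,\lambda_\beta^{(k)}, \qquad c_\beta^{(k)} := \sum_\alpha Q(k)_{\alpha\beta},
\end{equation*}
and since $\sum_\beta c_\beta^{(k)} = \|Q(k)\|$ under the sum-of-coefficients norm, squeezing the right-hand side between $\min_\beta\lambda_\beta^{(k)}\cdot\|Q(k)\|$ and $\max_\beta\lambda_\beta^{(k)}\cdot\|Q(k)\|$ yields both inequalities of the first chain.

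For the equivalence of (a) and (\ref{propA}), my plan is to establish the two-sided comparability
\begin{equation*}
c_d\,\|Z(k+1)\| \;\leq\; \frac{\max_\alpha\lambda_\alpha^{(k)}}{\min_\alpha\lambda_\alpha^{(k)}} \;\leq\; C_d\,\|Z(k+1)\|
\end{equation*}
with constants $c_d, C_d > 0$ depending only on $d$ and the Rauzy diagram $\mathcal{D}$. Once this is in hand, condition (a) combined with the upper half produces (\ref{propA}), while (\ref{propA}) combined with the lower half gives back (a); the two conditions will thereby be equivalent.

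To obtain the comparability I would exploit the defining property of the accelerated step $Z(k+1)$, namely that between levels $k$ and $k+1$ every letter except possibly one wins at least once. Expanding $\lambda_\alpha^{(k)} = \sum_\beta Z(k+1)_{\alpha\beta}\lambda_\beta^{(k+1)}$ gives the straightforward bound $\max_\alpha\lambda_\alpha^{(k)} \leq \|Z(k+1)\|\max_\beta\lambda_\beta^{(k+1)}$, and the winning structure then forces $\max_\beta\lambda_\beta^{(k+1)}$ to be comparable to $\min_\alpha\lambda_\alpha^{(k)}$ up to a combinatorial constant (each completed winning round consumes one copy of the current loser's length, which is bounded by $\min_\alpha\lambda_\alpha^{(k)}$). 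In the other direction, picking a near-maximal entry $Z(k+1)_{\alpha^*\beta^*} \geq \|Z(k+1)\|/d^2$ yields $\lambda_{\alpha^*}^{(k)} \geq Z(k+1)_{\alpha^*\beta^*}\,\lambda_{\beta^*}^{(k+1)}$, and the same winning structure produces the lower bound on $\lambda_{\beta^*}^{(k+1)}$ needed to conclude.

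The main obstacle will be verifying that this winning/losing bookkeeping, carried out in the linear involution setting via the ``minus'' Rauzy-Veech group on the double cover, still supports the combinatorial claims used above. Since the accelerated step $Z(k+1)$ is defined for generalized permutations by the same rule as for usual permutations, and since the anti-symmetric length data $\lambda^{(k)}$ still obeys the linear renormalization identity, I expect the matrices $Z(k+1)$ to retain the row/column structure required by the argument, so that the constants $c_d, C_d$ can be chosen uniformly in $k$ and the proof of \cite{MaMoYo1} carries over with only cosmetic changes.
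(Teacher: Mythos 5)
Your computation for the first chain of inequalities is fine: expanding $\lambda^{\ast}=\sum_{\alpha ,\beta}Q(k)_{\alpha\beta}\lambda_\beta^{(k)}$ and squeezing between $\min_\beta\lambda_\beta^{(k)}\Vert Q(k)\Vert$ and $\max_\beta\lambda_\beta^{(k)}\Vert Q(k)\Vert$ is exactly what underlies that half of the proposition. The problem is the second part: the two-sided comparability
$$
c_d\,\Vert Z(k+1)\Vert\;\le\;\frac{\max_\alpha\lambda_\alpha^{(k)}}{\min_\alpha\lambda_\alpha^{(k)}}\;\le\;C_d\,\Vert Z(k+1)\Vert
$$
on which your whole equivalence rests is false in both directions, essentially because it compares data living entirely at level $k$ with the matrix governing the transition from level $k$ to level $k+1$. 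For the upper half, take $k=0$ (where the lengths are arbitrary in the simplex) and lengths of the shape $(R,1.5,1,\dots)$ with $R$ huge: the accelerated step ends as soon as all letters but one have won once, which can happen within a bounded number of Rauzy moves (one win for a short letter against a shorter one, one win for the long letter), so $\Vert Z(1)\Vert$ stays bounded by a constant while $\max/\min=R$ is arbitrary. Your intermediate claim that $\max_\beta\lambda_\beta^{(k+1)}$ is comparable to $\min_\alpha\lambda_\alpha^{(k)}$ fails for the same reason: a letter that wins only once retains almost all of its possibly huge length. For the lower half, take $d\ge 4$ and nearly equal lengths $(1,1+\delta,1+2\delta,1+3\delta)$: after two moves only two letters have won, and the induction can then alternate between those two letters for on the order of $1/\delta$ steps before a third letter ever wins, so $\Vert Z(1)\Vert\sim 1/\delta$ while $\max/\min\to 1$. (If the lower half were true, (\ref{propA}) at a single level would already bound $\Vert Z(k+1)\Vert$ and the Roth-type condition would be nearly vacuous.) Note also that your bookkeeping sentence has an inequality backwards: the current loser's length is bounded \emph{below} by $\min_\alpha\lambda_\alpha^{(k+1)}$, not \emph{above} by $\min_\alpha\lambda_\alpha^{(k)}$, since a letter may already have been reduced far below every initial length by earlier wins.

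The correct inequality is necessarily mixed-level: it is (\ref{change1}), i.e. $\max_\alpha\lambda_\alpha^{(k)}\ge\min_\alpha\lambda_\alpha^{(k+1)}\Vert Z(k+1)\Vert/(2d)$, with the minimum taken at level $k+1$. The paper's proof of (\ref{propA})$\Rightarrow$(a) bridges the two levels by the key observation that the unique letter $\alpha_0$ which never wins during the accelerated step satisfies $\lambda_{\alpha_0}^{(k)}=\lambda_{\alpha_0}^{(k+1)}$; one then applies (\ref{propA}) at \emph{both} levels $k$ and $k+1$, chains the estimates through $\alpha_0$, cancels $\lambda_{\alpha_0}$, and obtains $\Vert Z(k+1)\Vert\le 2dC_\varepsilon^2\Vert Q(k)\Vert^{\varepsilon}\Vert Q(k+1)\Vert^{\varepsilon}$, which yields (a). The converse direction (a)$\Rightarrow$(\ref{propA}) is simply cited verbatim from \cite{MaMoYo1} and likewise proceeds by propagating estimates across levels rather than through a single-level two-sided bound. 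You need this two-level device; as it stands your central lemma is not a gap to be filled but a statement to be abandoned.
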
 
\begin{proof}
The proof for the fact that condition (a) implies~\ref{propA} repeats verbatim the proof in~\cite{MaMoYo1}. 
Now let is show that if we have~\ref{propA}, condition (a) also holds. 
It follows from the definition of Zorich's acceleration that 
\begin{equation}\label{change1} \max_{\alpha\in \mathcal{A}}\lambda_\alpha^{(k)}\ge \min_{\alpha\in \mathcal{A}}\lambda_\alpha^{(k+1)}\frac{\Vert Z(k+1)\Vert} {2d}\end{equation}

Then, there exists $\alpha_0\in\mathcal{A}$ such that $\lambda_{\alpha_0}^{(k)}=\lambda_{\alpha_{0}}^{(k+1)}$.

Now, by the assumption of the proposition, we have that 
$$\lambda_{\alpha_0}^{(k+1)}\le \max_{\alpha\in \mathcal{A}}\lambda_\alpha^{(k+1)}\le C_\varepsilon \min_{\alpha\in \mathcal{A}}\lambda_\alpha^{(k+1)}\Vert
Q(k+1)\Vert^{\varepsilon}.$$

Our observation (\ref{change1}) implies that 
\begin{align*}
C_\varepsilon \min_{\alpha\in \mathcal{A}}\lambda_\alpha^{(k+1)}\Vert Q(k+1)\Vert^{\varepsilon}\le \\
2dC_\varepsilon {\Vert Z(k+1)\Vert}^{-1}\max_{\alpha\in \mathcal{A}}\lambda_\alpha^{(k)}\Vert
Q(k)\Vert^{\varepsilon}\Vert Q(k+1)\Vert^{\varepsilon} \le \\ 
2d{C_\varepsilon}^2{\Vert Z(k+1)\Vert}^{-1}\max_{\alpha\in \mathcal{A}}\lambda_\alpha^{(k)}\Vert
Q(k)\Vert^{\varepsilon}\Vert
Q(k+1)\Vert^{\varepsilon}\le \\
2d{C_\varepsilon}^2{\Vert Z(k+1)\Vert}^{-1}\lambda_\alpha^{(k)}\Vert
Q(k)\Vert^{\varepsilon}\Vert
Q(k+1)\Vert^{\varepsilon},
\end{align*}
therefore $$\Vert Z(k+1)\Vert\le 2d{C_\varepsilon}^2\Vert
Q(k)\Vert^{\varepsilon}\Vert
Q(k+1)\Vert^{\varepsilon},$$
so condition (a) holds.
\end{proof}

\subsection{From Gottschalk-Hedlund theorem to linear involutions} \label{denjoy}
In this section we follow general strategy of~\cite[Section 2.1.2]{MaMoYo1}. Irreducible linear involution of Keane's type is minimal but not continuous. So, as in~\cite{MaMoYo1}, we need some kind of Denjoy construction to bypass this issue. 
If our linear involution is given by a map $T=f\circ \widehat T$ where $\widehat T$ acts $\widehat X = X\times {0,1}$, $Sing$ is the set of singular points and $f$ is the involution: $f(x, \epsilon)=(x, 1-\epsilon)$, then 
\begin{equation}\label{change2}
D_0=\{{\widehat T}^{-n}(Sing)\}, D_1 = \{{\widehat T}^n(f(Sing))\}. 
\end{equation}

The rest of the proof repeats verbatim the construction described in~\cite{MaMoYo1}. 

\section{Full measure}\label{measure}
In this section we prove Theorem \ref{B}. Since we already checked above that Condition (b) and Condition (c) (Lemma \ref{LM:b} and Lemma \ref{LM:c}) hold almost everywhere, we will focus on Condition $(a)$.
\subsection{Condition (a)}
The main step of the proof parallels~\cite[Section 4.6]{MaMoYo1}. The key proposition is the following one: 
\begin{proposition}
\label{prop:comb}
There exist an integer $l = l(d)$ and a constant $\eta = \eta(d) > 0$ with the following properties. 
Let $\gamma = (\gamma(n))_{0<n\leq N}$ be a finite path in $\mathcal D$ such that the set $\mathcal A'$ of names of arrows of $\gamma$ is distinct from $\mathcal A$. Assume that $D = \mathrm{card}\ \mathcal A' > 1$. Then there exists a subset $\Delta'(\gamma) \subset \Delta(\gamma)$ with
$$
\mathrm{vol}_{d-1}(\Delta'(\gamma)) \geq \eta \mathrm{vol}_{d-1}(\Delta(\gamma))
$$
such that, for every $T \in \Delta'(\gamma)$, there exists $M > N$ with
\begin{enumerate}
\item the name of $\gamma^{(M)}(T)$ does not belong to $\mathcal A'$;
\item no more than $l (D -1)$-segments are needed to cover $(\gamma(n))_{N\leq n<M}$.
\end{enumerate}
\end{proposition}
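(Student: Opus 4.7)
The plan is to adapt the inductive combinatorial argument of \cite[Section 4.6]{MaMoYo1} to the Rauzy diagram $\mathcal{D}$ of irreducible linear involutions, emphasizing the steps where generalized permutations behave differently from classical permutations. The argument naturally splits into a combinatorial lemma about $\mathcal{D}$ and a volume estimate on $\Delta(\gamma)$, and I would proceed by induction on $D=|\mathcal{A}'|$.

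The combinatorial lemma to be established is the following: there exist $l_0=l_0(d)$ with the property that, from every vertex $\pi$ of $\mathcal{D}$ and every proper subset $\mathcal{A}'\subsetneq \mathcal{A}$, there is a path of length at most $l_0$ starting at $\pi$ whose final arrow has name in $\mathcal{A}\setminus\mathcal{A}'$. This should follow from the irreducibility of the generalized permutations together with connectedness of the extended Rauzy class proved in \cite{BoLa}: if from some vertex every short continuation only used names in $\mathcal{A}'$, the arrows with names outside $\mathcal{A}'$ would form a disconnected piece of $\mathcal{D}$, contradicting the Boissy--Lanneau classification. The bound $l(D-1)$ in the proposition then comes from adding one new name at a time, each addition contributing a uniformly bounded number of segments; the induction reduces naturally to the case of a single missing letter.

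For the measure estimate, I would use the standard identification of $\Delta(\gamma\cdot\gamma')$ with a sub-simplex of $\Delta(\gamma)$ whose relative $(d-1)$-volume is controlled by the action of the Rauzy--Veech matrix associated to $\gamma'$. Letting $\Delta'(\gamma)$ be the union, over all continuations $\gamma'$ of segment-length at most $l(D-1)$ that introduce a new winner, of the sub-simplices $\Delta(\gamma\cdot\gamma')$, the disjointness of the pieces (distinct continuations give disjoint sub-simplices by definition of $\Delta(\cdot)$) plus the finite combinatorial type of $\mathcal{D}$ give a lower bound $\mathrm{vol}_{d-1}(\Delta'(\gamma))\ge \eta\,\mathrm{vol}_{d-1}(\Delta(\gamma))$ with $\eta=\eta(d)>0$ depending only on the alphabet size. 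This part transfers essentially verbatim from \cite{MaMoYo1} once the combinatorial lemma is in place.

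The principal obstacle is the combinatorial lemma itself: Rauzy induction on generalized permutations admits moves which can create reducible configurations and the type-$(l,m)$ structure produces a richer set of legal moves than in the IET case. One must therefore restrict to the connected component of irreducible permutations and verify that the ``escape from $\mathcal{A}'$'' path remains in this component. A secondary technical issue is the explicit tracking of constants, since both $l(d)$ and $\eta(d)$ must be uniform in the vertex $\pi$ and in $\mathcal{A}'$; this will require a finite inspection, bounded uniformly in $d$, of the subdiagrams spanned by arrows with names in a given subset $\mathcal{A}'$. If the case analysis becomes unwieldy, one can appeal directly to the Boissy--Lanneau structure theorem to avoid enumerating all local configurations.
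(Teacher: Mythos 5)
There is a genuine gap in your argument, and it sits exactly where the real difficulty of the proposition lies: the volume estimate. You claim that once a short escaping continuation $\gamma'$ exists in the diagram, the bound $\mathrm{vol}_{d-1}(\Delta'(\gamma))\ge \eta(d)\,\mathrm{vol}_{d-1}(\Delta(\gamma))$ follows from ``the finite combinatorial type of $\mathcal{D}$'' and transfers verbatim from \cite{MaMoYo1}. It does not. The relative volume of $\Delta(\gamma\cdot\gamma')$ inside $\Delta(\gamma)$ is governed by the ratios $\prod_\alpha Q_\alpha(\gamma)/Q_\alpha(\gamma\cdot\gamma')$ of column sums, and hence depends on how \emph{balanced} the columns of $Q(\gamma)$ are at time $N$ --- a quantity that is not bounded in terms of $d$ alone. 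If $\gamma$ is badly unbalanced, every continuation forcing a letter outside $\mathcal{A}'$ to win may carve out an arbitrarily small proportion of $\Delta(\gamma)$. This is precisely why the proof (here as in \cite{MaMoYo1}) introduces the notion of a $(D_1,n,C_1)$-balanced path, proves the proposition directly only for $(D,N,C_0)$-balanced $\gamma$ (with $\eta=\eta(d,C_0)$), and otherwise runs an iterative extension procedure (Cases A, B, Types I--III) showing that one can prolong $\gamma$ to a more balanced path while losing only a controlled fraction of volume. Your graph-theoretic escape lemma, while plausible via the connectedness results of \cite{BoLa}, is not the bottleneck and cannot substitute for this quantitative machinery; note also that the continuation of a Rauzy path is dictated by the length data of $T$, so ``choosing'' a path in $\mathcal{D}$ only pins down a sub-simplex whose size still has to be estimated.

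You also miss the one ingredient that is genuinely new for linear involutions, namely the estimate $Q_{\mathrm{ext}}(n,T)\le (2d-5)\,Q'(n,T)$ valid as long as all winners stay in $\mathcal{A}'$ (Lemma~\ref{lm:combi:1} of the paper). Its proof exploits the structure of generalized permutations: each letter occurs twice, so within a maximal $1$-segment the ``secondary names'' of the arrows recur with some period $d_i\le 2d-4$, which bounds the proportion of steps feeding the columns outside $\mathcal{A}'$. This control of $Q_{\mathrm{ext}}$ against $Q'$ is what makes the balance bookkeeping from \cite{MaMoYo1} go through in the non-orientable setting, and nothing in your proposal plays its role.
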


The fact that the full measure estimate for Condition (a) is a corollary of Proposition~\ref{prop:comb} is obtained
by using a tricky argument (see~\cite[Section 4.7]{MaMoYo1}) and a standard Borel-Cantelli argument. 
We now will explain how to prove Proposition~\ref{prop:comb}.

\subsection{Proof of Proposition~\ref{prop:comb}}

To be consistant, we will use notations of~\cite[\S 4.8]{MaMoYo1}. We denote by $Q_\beta(\gamma)$ the sum of the column $\beta$ of the
matrix associated to the path $\gamma$. If $T$ is a linear involution, we denote by $Q_\alpha(n,T)=Q_\beta(\gamma)$ where $\gamma$ is the finite path up to step $n$.\\
Let $\gamma, 
\mathcal A', D$ be as in the assumption. Pick an irreducible linear involution $T\in \Delta(\gamma)$  satisfying Keane's condition. 
For any $n\geq 0$ we denote
\begin{equation}
\begin{array}{c}
Q'(n,T) = \sum_{\alpha \in \mathcal A'} Q_{\alpha}(n,t)\\ \\
Q_{\mathrm{ext}}(n,T) = \sum_{\alpha \in \mathcal A \backslash \mathcal A'} Q_{\alpha}(n,t)\\
\end{array}
\end{equation}

\begin{lemma}
\label{lm:combi:1}
If the names of the arrows $\gamma(m)(T)$ belong to $\mathcal A'$ for $m \leq n$, we have 
$$
Q_{\mathrm{ext}}(n, T ) \leq (2d - 5)Q' (n, T ).
$$
\end{lemma}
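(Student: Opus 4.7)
The plan is to mimic the combinatorial argument of the analogous lemma in~\cite[\S 4.8]{MaMoYo1}, suitably adapted to linear involutions. The underlying mechanism of Rauzy--Veech induction, in both settings, is that at each step only the column of the \emph{winner} of that step is modified: writing $\alpha$ for the winner and $\beta$ for the loser, the new $\alpha$-column equals the old $\alpha$-column plus the old $\beta$-column, and every other column is left unchanged. Iterating between steps $0$ and $n$ under the standing hypothesis that no letter outside $\mathcal A'$ ever wins, one obtains that for every $\alpha \in \mathcal A\setminus \mathcal A'$ the column sum $Q_\alpha(n,T)$ still equals its initial value $Q_\alpha(0,T)=1$, so
$$
Q_{\mathrm{ext}}(n,T) \;=\; |\mathcal A\setminus \mathcal A'| \;=\; d - D.
$$

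On the other hand, every column indexed by $\alpha \in \mathcal A'$ starts from $1$ and only grows, giving $Q'(n,T) \geq D$. The desired bound $d-D \leq (2d-5)\,Q'(n,T)$ then reduces to the elementary inequality $d-D \leq (2d-5)\,D$, which holds whenever $D\geq 2$ and $d\geq 3$; both conditions are guaranteed, respectively, by the assumption $D > 1$ of Proposition~\ref{prop:comb} and by the irreducibility of the generalized permutation of~\cite{BoLa} (which rules out the small-dimensional cases). This is why the hypothesis $D > 1$ is essential: when $D=1$ the ratio $(d-D)/Q'(n,T)$ is unbounded, and the conclusion of the lemma genuinely fails.

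The only non-routine ingredient in this plan is the verification that the winner/loser transvection rule really does describe a single Rauzy--Veech step for a linear involution. In the setting of~\cite{BoLa} both occurrences of a letter may lie on the same row of the generalized permutation, a configuration absent from classical IETs, so one must check directly that the resulting update of the $d\times d$ matrix is still an elementary transvection $Q_\alpha \mapsto Q_\alpha + Q_\beta$. The cleanest way to handle this is to lift the induction to the ``minus'' oriented double cover of~\cite{AvRe,Gu}, on which a Rauzy--Veech step becomes a genuine elementary matrix operation. The main obstacle is therefore combinatorial bookkeeping on the generalized permutation rather than any new analytic input: once it is verified that the winner column remains the only column to change at each step, the elementary column-sum comparison above goes through and yields the multiplicative constant $2d-5$.
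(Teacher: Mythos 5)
Your argument rests on a reversed convention for the Rauzy--Veech update, and this reversal makes the lemma trivial in a way it is not. Here $Q_\beta(\gamma)$ is the sum of the column $\beta$, and a single induction step with winner $\alpha$ and loser (secondary name) $\beta$ updates the \emph{loser's} column: $Q_\beta \mapsto Q_\beta + Q_\alpha$, all other columns unchanged. This is visible in the recurrences used in the paper's proof, where $Q_{\mathrm{ext}}(m,T) = Q_{\mathrm{ext}}(m-1,T) + Q_{\alpha_i}(n_i-1,T)$ whenever the secondary name $\beta_{m,i}$ of the $m$-th arrow lies outside $\mathcal A'$. The hypothesis of the lemma constrains only the \emph{winners} (the names of the arrows) to lie in $\mathcal A'$; the losers range over all of $\mathcal A$, so columns indexed by $\mathcal A\setminus\mathcal A'$ do grow, and your central claim that $Q_{\mathrm{ext}}(n,T) = d-D$ stays constant is false. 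With it falls the reduction to the inequality $d-D \leq (2d-5)D$, which in any case would not explain why the constant is $2d-5$ rather than anything of order $d$.

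The actual content of the lemma, which is absent from your proposal, is a bound on \emph{how often} the loser falls outside $\mathcal A'$. One decomposes $[1,n]$ into maximal $1$-segments on which the winner $\alpha_i\in\mathcal A'$ is constant; within such a segment the secondary names recur with some period $d_i \leq 2d-4$, and at least one secondary name per period belongs to $\mathcal A'$ (e.g.\ the previous winner $\alpha_{i-1}$). Since every increment in the segment has the same size $Q_{\alpha_i}(n_i-1,T)$, the ratio of the growth of $Q_{\mathrm{ext}}$ to the growth of $Q'$ on each segment is at most $d_i-1 \leq 2d-5$, and an initial-segment estimate handles the boundary terms. That periodicity argument is the whole point, and it is where the specific constant $2d-5$ comes from; your proof needs to be rebuilt around it. (Your final paragraph about checking that a step of the induction for linear involutions is still an elementary transvection is a reasonable concern, but it does not repair the direction of the transvection.)
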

\begin{proof}[Proof of Lemma~\ref{lm:combi:1}]
We start by dividing the segment $[1,n]$ into maximal $1$-segments $I_i=[n_i,n_{i+1})$, for $i=0,\dots,k$, into which the name of the arrows is 
$\alpha_i$. By assumption $\alpha_i\in\mathcal A'$. For every $i\geq 0$, if the letter $\alpha_i$ is simple then
the secondary names of the arrows in the interval $I_i$ appear with some periodicity, say $d_i \leq 2d-4$.
Moreover if $i > 0$ then the secondary name of $\gamma^{(n_i)}$ is $\alpha_{i-1}$.

In the first segment $[1,n_1)$, if the letter $\alpha_0$ is simple, we write $n_1 = kd_0 + n'_1$ with $0 < n'_1 \leq d_0$
and the secondary name of $\gamma^{(m)}$ is $\alpha_{1}$ for each $m=n_1-kd_0$, $k>0$. We will write also
$n_1 = kd_0 + n'_1$, $0 < n'_1 \leq d_0$.

Now for each $m \in I_i$, if the secondary name of $\gamma(m)$ is $\beta_{m,i}$ then
\begin{equation}
\begin{array}{l}
Q_{\mathrm{ext}} (m, T ) = Q_{\mathrm{ext}} (m - 1, T )\\
Q'(m,T)=Q'(m-1,T)+Q_{\alpha_i}(n_i -1,T)
\end{array}
\qquad
\textrm{if } \beta_{m,i} \in \mathcal A',
\end{equation}
and
\begin{equation}
\begin{array}{l}
Q_{\mathrm{ext}} (m, T ) = Q_{\mathrm{ext}} (m - 1, T ) +Q_{\alpha_i}(n_i -1,T)\\
Q'(m,T)=Q'(m-1,T)
\end{array}
\qquad
\textrm{if } \beta_{m,i} \not \in \mathcal A'.
\end{equation}
We have to control when the secondary name does not belong to $\mathcal A'$. Obviously in every segments
$[n'_1,n_{1})$ and $[n_i,n_{i+1})$ for $i\geq 1$, if the name of the arrows is simple then
$$
\frac{\#\{\textrm{secondary names}\not \in \mathcal A' \}}{\#\{\textrm{secondary names} \in \mathcal A' \}} \leq \frac{d_i - 1}{1} \leq 2d-5
$$
and, if the name of the arrows is double then
$$
\frac{\#\{\textrm{secondary names}\not \in \mathcal A' \}}{\#\{\textrm{secondary names} \in \mathcal A' \}} \leq 2d-4
$$
Finally we have to estimates $Q_{\mathrm{ext}}$ and $Q'$  for $m\in [0,n'_{1})$:
$$
Q'(m,T) \geq D \geq 2, \textrm{ and}
$$
$$
Q_{\mathrm{ext}}(m,T) \leq  Q_{\mathrm{ext}}(0,T) + m \leq d-D + n'_1-1 \leq d-2 + d-2 \leq (d-2)Q'(m,T).
$$
The lemma is proved.
\end{proof}

\begin{definition}
For any $1 \leq D_1 \leq D$ and $n \geq 0$, $C_1 > 0$ we say that $T\in \Delta(\gamma)$ is $(D_1,n,C_1)$-balanced if we have
$Q_\alpha(n,T) \geq C_1^{-1}Q'(n,T)$ for at least $D_1$ of the indices $\alpha\in \mathcal A'$.
\end{definition}
Since the property only depends on the path $\gamma$ up to $n$, we will also say that this path is $(D_1, n, C_1)$-balanced. 
For instance any path is $(1, n, D)$-balanced for all $n \geq 0$.

\begin{lemma}
If $\gamma$ is $(D, N, C_0)$-balanced for some constant $C_0 > 0$ then we can 
find $\Delta'(\gamma)\subset \Delta(\gamma)$ satisfying the conclusions of the proposition
with $l = l(d)$ and $\eta = \eta(d,C_0)$.
\end{lemma}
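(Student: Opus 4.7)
The plan is to mimic the analogous step of~\cite[\S 4.8]{MaMoYo1}, adapting it to the Rauzy diagram $\mathcal D$ of irreducible generalized permutations. The $(D,N,C_0)$-balance hypothesis says that the column sums $Q_\alpha(N,T)$, $\alpha \in \mathcal A'$, are all comparable to $Q'(N,T)$ up to the factor $C_0$. Consequently the widths of the simplex $\Delta(\gamma)$ in the directions indexed by $\mathcal A'$ are comparable, and the relative $(d-1)$-volume of any sub-simplex $\Delta(\gamma^*) \subset \Delta(\gamma)$ cut out by a finite extension $\gamma^*$ of $\gamma$ can be estimated from the column sums of $Q(\cdot,\gamma^*)$ up to a constant depending only on $d$ and $C_0$, as long as the balance is not destroyed beyond a uniform amount.

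First I would establish a purely combinatorial lemma on $\mathcal D$: starting at any vertex, every letter of $\mathcal A$ can be made to win within a path whose length, measured in maximal $1$-segments, is bounded by some $l_0(d)$. This reachability follows from the connectedness of the extended Rauzy classes established by Boissy--Lanneau~\cite{BoLa} together with the finiteness of $\mathcal D$. Iterating the lemma produces an extension $\gamma^*$ of $\gamma$, covered by at most $l(d)(D-1)$ maximal $1$-segments, such that at each stage one either directly reaches an arrow whose name lies in $\mathcal A \setminus \mathcal A'$ (in which case we stop and declare $M$ to be that step), or one reaches an arrow named by some letter of $\mathcal A'$ that has not yet won along the current sub-path, strictly reducing the number of ``available'' letters of $\mathcal A'$. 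After at most $D-1$ such reductions the procedure must terminate with an arrow whose winner lies in $\mathcal A \setminus \mathcal A'$.

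Second I would translate $\gamma^*$ into a measure estimate. A single Rauzy--Veech extension of the current path by one arrow with winner $\alpha$ produces a sub-simplex whose relative $(d-1)$-volume is bounded below by a multiple of $Q_\alpha/\Vert Q\Vert$, which, under the balance assumption, is bounded below by some $c(d,C_0) > 0$. Along $\gamma^*$ the column sums grow by at most a factor depending on $d$, so the balance is preserved up to a constant depending on $d$ and $C_0$ throughout. Composing these elementary estimates yields
$$
\mathrm{vol}_{d-1}(\Delta(\gamma^*)) \ge \eta(d,C_0)\, \mathrm{vol}_{d-1}(\Delta(\gamma)),
$$
and we set $\Delta'(\gamma) := \Delta(\gamma^*)$. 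By construction, for every $T \in \Delta'(\gamma)$ the Rauzy--Veech iterates of $T$ follow $\gamma^*$: condition (1) of Proposition~\ref{prop:comb} holds by the choice of $M$, and condition (2) holds by the segment count.

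The main obstacle is the combinatorial reachability step. For IETs the existence of ``complete'' Rauzy paths of uniformly bounded maximal-$1$-segment length is classical and an easy consequence of the strong connectedness of the Rauzy class. For linear involutions the presence of doubled letters and the extra structure coming from the oriented double cover and from the \emph{minus} Rauzy--Veech groups of~\cite{Gu} make the statement less automatic: one must verify that extended Rauzy classes of irreducible generalized permutations admit complete paths whose maximal $1$-segment count is controlled linearly in $D-1$ with a slope depending only on $d$. Once this combinatorial input is in place, the bookkeeping of column-sum growth and of relative $(d-1)$-volumes is routine and proceeds exactly as in the IET case.
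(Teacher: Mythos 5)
The paper's own proof of this lemma is a one-line deferral to \cite[Lemma 2]{MaMoYo1}, so the comparison is really with that argument; your proposal takes a different and, as written, incomplete route. The serious gap is in the volume estimate, which is the whole quantitative content of the lemma. The relative $(d-1)$-volume of the sub-simplex obtained by appending to a path with matrix $Q$ one arrow with winner $\alpha$ and loser $\beta$ equals $Q_\beta/(Q_\alpha+Q_\beta)$: it is the \emph{loser's} column that governs the loss, not the winner's, since it is the loser's column that changes under the Rauzy--Veech matrix. The balance hypothesis bounds from below only the columns indexed by $\mathcal A'$, and Lemma~\ref{lm:combi:1} gives only an \emph{upper} bound on $Q_{\mathrm{ext}}$; a letter outside $\mathcal A'$ may have column sum of order $1$ while $\Vert Q\Vert$ is arbitrarily large, in which case an arrow lost by such a letter costs a factor of order $\Vert Q\Vert^{-1}$, which is not bounded below by any $c(d,C_0)$. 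So the claim that each elementary extension costs at most a factor depending only on $d$ and $C_0$ is false in general. The construction can be repaired --- at any vertex where a letter of $\mathcal A\setminus\mathcal A'$ occupies a winning position one must stop (that vertex already furnishes the terminal step $M$), so that along the chosen extension every loser prior to the terminal arrow lies in $\mathcal A'$ and hence has column at least $C_0^{-1}Q'\geq c(d,C_0)\Vert Q\Vert$ by balance and Lemma~\ref{lm:combi:1} --- but this is exactly the point that needs an argument and it is absent from your write-up. Relatedly, you control the length of $\gamma^*$ only in maximal $1$-segments, whereas the volume loss is incurred per \emph{arrow} and a $1$-segment may contain arbitrarily many arrows; you need the number of arrows bounded by a function of $d$ alone (which does follow from finiteness of $\mathcal D$ and a shortest-path choice), together with the observation that each column at most doubles per arrow, to keep the per-arrow loss uniform.

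A second, smaller point: the argument the paper imports from \cite{MaMoYo1} does not construct an explicit extension path at all. It cuts $\Delta'(\gamma)$ out of $\Delta(\gamma)$ by an explicit inequality on the renormalized length data $\lambda^{(N)}$ (forcing a letter of $\mathcal A\setminus\mathcal A'$ to be dominant), lower-bounds the relative volume of that set directly from the balance of the columns, and then argues dynamically that for every such $T$ the \emph{actual} Rauzy path must produce a name outside $\mathcal A'$ within a bounded number of $(D-1)$-segments. That route makes the balance constant enter only once and avoids the per-arrow bookkeeping entirely. If you prefer your constructive version, the reachability input you flag (complete paths of uniformly bounded length in the Rauzy diagrams of irreducible generalized permutations) is indeed the piece specific to linear involutions and deserves a proof, but the volume accounting above is the step that currently fails.
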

\begin{proof}
The proof is a verbatim copy of the proof of the corresponding lemma in~\cite[Lemma 2]{MaMoYo1}.
\end{proof}

Now if $\gamma$ is only $(\widetilde{D},N,\widetilde{C})$-balanced for some $\widetilde{D}<D$, the strategy is to extend $\gamma$ without losing volume in order to obtain a more balanced path. At the end of this process, we should be able to apply above lemma and to get the conclusion of Proposition~\ref{prop:comb}. The idea is to extend the path $\gamma$ and to control the volume of the associated simplex. There are several cases to distinguish (as in~\cite{MaMoYo1}, Case A, Case B, Type I,II,III). In any case, an argument completely similar to~\cite{MaMoYo1} leads to the desired estimate.

\section{Acknowledgements} We thank G.~Forni, C.~Matheus and R.~Guti\'errez-Romo for fruitful discussions. 
The first author was partially supported by the Labex Persyval. The second author is grateful to UniCredit Bank R$\&$D group for financial support through the "Dynamics and
Information Theory Institute" at the Scuola Normale Superiore. The third author was partially supported by RFBR-CNRS grant No. 18-51--15010.

\end{document}